\author{J.-P. Allouche
\\ CNRS, IMJ-PRG \\
Universit\'e P.\ et M.\ Curie \\
Case 247, 4 Place Jussieu \\
F-75252 Paris Cedex 05  \\
France \\
{\tt jean-paul.allouche@imj-prg.fr} \\
\and
S. Riasat and J. Shallit \\
School of Computer Science \\
University of Waterloo \\
Waterloo, ON  N2L 3G1 \\
Canada\\
{\tt sriasat@uwaterloo.ca} \\
{\tt shallit@cs.uwaterloo.ca} \\
}
\title{More Infinite Products: \\
Thue-Morse and the Gamma function}
\date{ }
\theoremstyle{plain}
\newtheorem{theorem}{Theorem}[section]
\newtheorem{lemma}[theorem]{Lemma}
\newtheorem{corollary}[theorem]{Corollary}
\theoremstyle{definition}
\newtheorem{remark}[theorem]{Remark}
\begin{document}

\maketitle

\begin{abstract}
Letting $(t_n)$ denote the Thue-Morse sequence with values $0, 1$,
we note that the Woods-Robbins product 
$$
\prod_{n \geq 0} \left(\frac{2n+1}{2n+2}\right)^{(-1)^{t_n}} = 2^{-1/2}
$$
involves a rational function in $n$
and the $\pm 1$ Thue-Morse sequence $((-1)^{t_n})_{n \geq 0}$.  
The purpose of this paper is twofold.  On the one hand,
we try to find other rational functions  for 
which similar infinite products involving the
$\pm 1$ Thue-Morse sequence have an expression in terms of known constants.
On the other hand,
we also try to find (possibly different) rational functions $R$ for which the 
infinite product $\prod R(n)^{t_n}$ also has an expression in terms of known constants.
\end{abstract}

 \section{Introduction}
Several infinite products involving the sum of binary digits of the integers were inspired by
the discovery of the Woods and Robbins infinite product (see \cite{Woods, Robbins}). 
More precisely, letting $t_n$ denote the sum, modulo $2$, of the binary digits of the integer 
$n$, the sequence $(t_n)_{n \geq 0} = 0 \ 1 \ 1 \  0 \  1 \  0 \  0 \ 1 \  1 \  0 \  0 \  1 \ldots$ is called the
Thue-Morse sequence with values $0$ and $1$ (see, e.g., \cite{ubiquitous} and the references
therein). The Woods-Robbins product identity is
\begin{equation}\label{WR}
\prod_{n \geq 0} \left(\frac{2n+1}{2n+2}\right)^{(-1)^{t_n}} = \frac{1}{\sqrt{2}} \cdot
\end{equation}
Several  infinite products inspired by (\ref{WR}) were discovered later (see, e.g., 
\cite{ACMFS, AS-infinite-products, All, All-Sondow, Hu}). They all involve,
as exponents, sequences 
of the form $(-1)^{u_{w,b}(n)}$ where $u_{w,b}(n)$ is the number, reduced modulo $2$, of 
occurrences of the word (the block) $w$ in the $b$-ary expansion of the integer $n$. But none of 
these products are in terms of
0-1-sequences $(u_{w,b}(n))_{n \geq 0}$ alone. In particular, none of them 
are in terms of
the binary sequence $(t_n)_{n \geq 0} = (u_{1,2}(n))_{n \geq 0}$ given above.
Furthermore, there has been
no attempt up to now to find explicitly-given
large classes of rational functions $R$ for which the infinite product
$\prod R(n)^{(-1)^{t_n}}$ has an expression in terms of known constants.

\bigskip

The purpose of this paper is thus twofold. First, to find other infinite products of the form
$\prod R(n)^{(-1)^{t_n}}$ admitting an expression in terms of known
constants. Second, to find infinite products of 
the form $\prod R(n)^{t_n}$ also having an expression in terms of
known constants. Two examples that we find are
\begin{equation*}
\prod_{n \geq 0} \left(\frac{4n+1}{4n+3}\right)^{(-1)^{t_n}} = \frac{1}{2}
\cdot
\end{equation*}
\begin{equation*}
\prod_{n \geq 0} \left(\frac{(4n+1)(4n+4)}{(4n+2)(4n+3)}\right)^{t_n} = \frac{\pi^{3/4} \sqrt{2}}{\Gamma(1/4)}  \cdot
\end{equation*}

\section{Products of the form $\prod R(n)^{(-1)^{t_n}}$}

We start with a lemma about the convergence of infinite products involving 
the sequence $((-1)^{t_n}))$.

\begin{lemma}\label{num-den}
Let $t_n$ be the sum, reduced modulo $2$, of the binary digits of the integer $n$. Let 
$R \in {\mathbb C}(X)$ be a rational function such that the values $R(n)$ are defined for $n \geq 1$. 
Then the infinite product \, $\prod_n R(n)^{(-1)^{t_n}}$ converges if and only if the numerator and 
the denominator of $R$ have same degree and same leading coefficient.
\end{lemma}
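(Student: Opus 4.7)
The plan is to translate the convergence of the infinite product into the convergence of the series $\sum_{n \geq N_0} (-1)^{t_n} \log R(n)$, which is legitimate for $N_0$ large enough so that $R(n)$ is defined and nonzero. The engine of the proof is the following elementary summatory property of the Thue-Morse sequence: from $t_{2k} = t_k$ and $t_{2k+1} = t_k + 1 \pmod 2$ one gets $(-1)^{t_{2k}} + (-1)^{t_{2k+1}} = 0$, hence the partial sums
\[
S_N := \sum_{n=0}^{N-1} (-1)^{t_n}
\]
satisfy $S_{2N} = 0$ and $S_{2N+1} = (-1)^{t_{2N}} \in \{+1,-1\}$. In particular, $(S_N)$ is uniformly bounded but does not converge.

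For sufficiency, suppose the numerator and denominator of $R$ have equal degree and equal leading coefficient. A Laurent expansion at infinity yields $R(n) = 1 + \alpha/n + O(1/n^2)$ for some constant $\alpha$, so (using the principal branch of the logarithm for $n$ large) $\log R(n) = \alpha/n + O(1/n^2)$. The contribution $\sum (-1)^{t_n} O(1/n^2)$ converges absolutely, and the remaining piece $\alpha \sum (-1)^{t_n}/n$ converges by Abel summation, from the boundedness of $S_N$ together with the monotone decrease of $1/n$ to zero.

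For necessity, there are two failure modes. If the degrees of numerator and denominator differ, then $|R(n)| \to \infty$ or $|R(n)| \to 0$, so $|\log R(n)| \to \infty$; the general term of the series does not go to zero and the product diverges. If the degrees agree but the leading coefficients differ, then $R(n) \to c$ for some $c \neq 1$. Applying the already-proved sufficient direction to the rational function $R/c$ (which does meet the hypothesis) shows that $\sum (-1)^{t_n} \log (R(n)/c)$ converges, so the convergence of $\sum (-1)^{t_n} \log R(n)$ is equivalent to the convergence of $(\log c)\, S_N$; since $c \neq 1$ and $S_N$ does not converge, the product diverges.

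The only real subtlety is the second failure case: one has to peel off the nonzero constant limit before invoking the bounded-partial-sums argument, and this is what the trick of dividing $R$ by $c$ accomplishes. Everything else is either a term-test argument or a clean application of Abel summation.
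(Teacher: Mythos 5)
Your proof is correct, but the sufficiency direction takes a genuinely different route from the paper's. The paper first reduces to linear factors $\left(\frac{n+b}{n+c}\right)^{(-1)^{t_n}}$ and then exploits the recursion $(-1)^{t_{2n}}=(-1)^{t_n}$, $(-1)^{t_{2n+1}}=-(-1)^{t_n}$ to pair the factors at indices $2n$ and $2n+1$: the paired general term is $1+{\mathcal O}(1/n^2)$, so the resulting product converges absolutely and no summation by parts is needed. You instead expand $\log R(n)=\alpha/n+{\mathcal O}(1/n^2)$ directly and dispose of the $\alpha/n$ term by Dirichlet's test, using only the boundedness of the partial sums $S_N$. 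The paper's pairing trick is tailored to the $2$-block self-similarity of Thue--Morse and makes the $1/n$ terms cancel outright, while your argument is more robust: it works verbatim for any $\pm 1$ exponent sequence with bounded partial sums, and with the same Abel summation even for sublinear partial sums --- which is exactly the argument the paper itself falls back on in Section~5 for the Golay--Shapiro sequence, whose partial sums are only ${\mathcal O}(\sqrt{n})$. On the necessity side you do more work than needed: once the degrees agree and $R(n)\to c\neq 1$, the factors with $t_n=0$ already show that the general term of the product does not tend to $1$, which is the paper's one-line term-test argument; your detour through $R/c$ and the divergence of $(\log c)S_N$ is valid, but it must quietly absorb a possible constant $2\pi i$ branch discrepancy in the identity $\log R(n)=\log(R(n)/c)+\log c$, and that discrepancy should be acknowledged (it does not change the conclusion, since the combined coefficient of $S_N$ vanishes only when $c=1$).
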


\begin{proof}
If the infinite product converges, then $R(n)$ must tend to $1$ when $n$ tends to infinity.
Thus the numerator and  the denominator of $R$ have the same degree and the same leading
coefficient.

\smallskip

Now suppose that the numerator and the denominator of $R$ have the same leading coefficient 
and the same degree. Decomposing them into factors of degree $1$, it suffices, for proving that the 
infinite product converges, to show that infinite products of the form 
$\prod_{n \geq 1} \left(\frac{n+b}{n+c}\right)^{(-1)^{t_n}}$ converge for complex numbers $b$ and
$c$ such that $n+b$ and $n+c$ do not vanish for any $n \geq 1$. Since the general factor of such
a product tends to $1$, this is equivalent, grouping the factors pairwise,
to proving that the product
$$\prod_{n \geq 1} \left[\left(\frac{2n+b}{2n+c}\right)^{(-1)^{t_{2n}}} 
\left(\frac{2n+1+b}{2n+1+c}\right)^{(-1)^{t_{2n+1}}}\right]$$ converges. 
Since $(-1)^{t_{2n}} = (-1)^{t_n}$ and $(-1)^{t_{2n+1}} = - (-1)^{t_n}$ we only need to prove that
the infinite product 
$$\prod_{n \geq 1} \left(\frac{(2n+b)(2n+1+c)}{(2n+c)(2n+1+b)}\right)^{(-1)^{t_n}}$$
converges. Taking the (principal determination of the) logarithm, we see that
$$\log\left(\frac{(2n+b)(2n+1+c)}{(2n+c)(2n+1+b)}\right) = {\mathcal O}(1/n^2),$$ which gives the
convergence result. 
\end{proof}

\bigskip

In order to study the infinite product $\prod_{n \geq 1} R(n)^{(-1)^{t_n}}$, it suffices, using 
Lemma~\ref{num-den} above, to study products of the form $\prod_n \left(\frac{n+a}{n+b}
\right)^{(-1)^{t_n}}$ where $a$ and $b$ belong to ${\mathbb C} \setminus \{-1, -2, -3, \ldots\}$.

\begin{theorem}\label{f-g}
Define
$$
f(a,b) := \prod_{n \geq 1} \left(\frac{n+a}{n+b}\right)^{(-1)^{t_n}} \ \ \mbox{\rm and} \ \ \ \
g(x) := \frac{f(\frac{x}{2}, \frac{x+1}{2})}{x+1}
$$
for $a$, $b$, $x$ complex numbers that are not negative integers.
Then 
$$
f(a,b) = \frac{g(a)}{g(b)} \cdot
$$
Furthermore, $g$ satisfies the functional equation
$$
(1+x) g(x) = \frac{g(\tfrac{x}{2})}{g(\tfrac{x+1}{2})} \ \ \ \ \
\forall x \in {\mathbb C} \setminus \{-1, -2, -3, \ldots \}.
$$
In particular we have $g(1/2) = 1$ and $g(1) = \sqrt{2}/2$.
\end{theorem}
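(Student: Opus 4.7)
The strategy is to exploit the self-similarity $(-1)^{t_{2n}} = (-1)^{t_n}$ and $(-1)^{t_{2n+1}} = -(-1)^{t_n}$ of the Thue-Morse sequence, together with the obvious cocycle identity $f(a,b) f(b,c) = f(a,c)$, which is immediate from the definition of $f$ (the finite partial products satisfy it exactly, and it passes to the limit).

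First, I would derive a functional equation for $f$ itself. Splitting the defining product of $f(a,b)$ according to the parity of $n$, applying the self-similarity, and pulling the $m=0$ term out of the odd subsum (which contributes $((1+a)/(1+b))^{-1} = (1+b)/(1+a)$), the two remaining subproducts become $f(a/2, b/2)$ and $f((a+1)/2, (b+1)/2)^{-1}$ after the changes of variable $n=2m$ and $n=2m+1$. Each subproduct converges on its own by Lemma~\ref{num-den}, and one obtains
$$f(a,b) = \frac{1+b}{1+a} \cdot \frac{f(a/2, b/2)}{f((a+1)/2, (b+1)/2)}.$$

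Next, to prove $f(a,b) = g(a)/g(b)$, I would apply the cocycle identity in the form
$$f(a/2, b/2) = f(a/2, (a+1)/2) \cdot f((a+1)/2, (b+1)/2) \cdot f((b+1)/2, b/2),$$
substitute into the previous display, and observe that the middle factor cancels the denominator $f((a+1)/2, (b+1)/2)$, leaving $(1+b)/(1+a) \cdot f(a/2, (a+1)/2) / f(b/2, (b+1)/2) = g(a)/g(b)$. The functional equation for $g$ is then automatic: by definition $(1+x)g(x) = f(x/2,(x+1)/2)$, while by the identity just proved (applied at $a = x/2$, $b = (x+1)/2$) the right-hand side equals $g(x/2)/g((x+1)/2)$.

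For the special values, substituting $x=0$ in the functional equation gives $g(0) = g(0)/g(1/2)$, hence $g(1/2) = 1$, since $g(0)$ is a nonzero convergent product. For $g(1)$, I would compute directly: $g(1) = f(1/2, 1)/2 = \tfrac12 \prod_{n \geq 1}((2n+1)/(2n+2))^{(-1)^{t_n}}$, and peeling the missing $n=0$ factor $1/2$ off the Woods-Robbins product~\eqref{WR} identifies the remaining $n \geq 1$ tail as $\sqrt{2}$, yielding $g(1) = \sqrt{2}/2$. The hard part will be the bookkeeping in the first step — correctly isolating the $n=0$ boundary term in the odd subsum, handling the exponent sign flip, and performing the changes of variable inside the product while invoking Lemma~\ref{num-den} for convergence — after which everything follows mechanically.
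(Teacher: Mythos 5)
Your proof is correct and, at its core, follows the same route as the paper: split the product over even and odd indices, use $(-1)^{t_{2n}}=(-1)^{t_n}$ and $(-1)^{t_{2n+1}}=-(-1)^{t_n}$, extract the $n=0$ boundary factor $(1+b)/(1+a)$, and reorganize the four linear factors into values of $f$ at the half-arguments. The only structural difference in the main identity is cosmetic: the paper regroups the combined product directly as $f(\tfrac{a}{2},\tfrac{a+1}{2})/f(\tfrac{b}{2},\tfrac{b+1}{2})$, whereas you first read it as $f(\tfrac{a}{2},\tfrac{b}{2})/f(\tfrac{a+1}{2},\tfrac{b+1}{2})$ and then invoke the cocycle identity $f(a,b)f(b,c)=f(a,c)$ to re-pair the arguments; both regroupings need Lemma~\ref{num-den} to justify splitting one convergent product into two, which you correctly flag. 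The one substantive divergence is the evaluation of $g(1)$: the paper sets $x=1$ in the functional equation to get $2g(1)^2=g(1/2)=1$ and concludes $g(1)=1/\sqrt{2}$ from positivity, which is entirely self-contained, while you import the Woods--Robbins identity \eqref{WR} from the literature and peel off the $n=0$ factor. Your route is valid and not circular (the paper's later re-derivation of Woods--Robbins from this theorem does not use the value of $g(1)$), but the paper's argument is preferable here since it makes the theorem independent of \eqref{WR}, which the paper then recovers as Corollary~\ref{examples}(a).
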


\begin{proof}
Recall that $(-1)^{t_{2n}}$ and $(-1)^{t_{2n+1}} = - (-1)^{t_n}$. Hence
\begin{align*} 
f(a,b) &= {\prod_{n \geq 1} \left(\frac{n+a}{n+b}\right)^{(-1)^{t_n}}
= \prod_{n \geq 1} \left(\frac{2n+a}{2n+b}\right)^{(-1)^{t_{2n}}}
       \prod_{n \geq 0} \left(\frac{2n+1+a}{2n+1+b}\right)^{(-1)^{t_{2n+1}}}} \\
&= {\prod_{n \geq 1} \left(\frac{(2n+a)(2n+1+b)}{(2n+b)(2n+1+a)}\right)^{(-1)^{t_n}}
\left(\frac{1+b}{1+a}\right)} \\
&= {\prod_{n \geq 1}
\left( \frac{(n + \frac{a}{2})(n + \frac{1+b}{2})}{(n + \frac{a+1}{2})(n + \frac{b}{2})} \right)^{(-1)^{t_n}}\left(\frac{1+b}{1+a}\right)} \\[2pt]
&= {\frac{f(\frac{a}{2}, \frac{a+1}{2})}{f(\frac{b}{2}, \frac{b+1}{2})} \left(\frac{1+b}{1+a}\right)} 
= \frac{g(a)}{g(b)}  \cdot
\end{align*} 
Now taking $a = \frac{x}{2}$ and $b = \frac{x + 1}{2}$ in the equality $\frac{g(a)}{g(b)} = f(a,b)$ 
yields
$$ 
\frac{g(\frac{x}{2})}{g(\frac{x+1}{2})} = f(\tfrac{x}{2}, \tfrac{x+1}{2}) = (x+1) g(x),
$$ 
which is the announced functional equation.

Finally putting $x=0$ in this functional equation, and noting that $g(0) \neq 0$, yields
$g(\frac{1}{2}) = 1$, while putting $x=1$ gives $g(1)^2 = \frac{1}{2}$, hence 
$g(1) = \frac{1}{\sqrt{2}}$, since $g(1) > 0$.  
\end{proof}

\bigskip

This theorem implies many identities, including the original one
of Woods-Robbins (W.-R.).

\begin{corollary}\label{general-examples}
Let $a$ and $b$ belong to ${\mathbb C} \setminus \{-1, -2, -3, \ldots\}$. Then the 
following equalities hold.
$$
\begin{array}{lll}
(i) \ \ \ \ \ &\displaystyle\prod_{n \geq 1}\left(\frac{(n+a)(2n+a+1)(2n+b)}{(2n+a)(n+b)(2n+b+1)}
\right)^{(-1)^{t_n}}
&= \displaystyle\frac{b+1}{a+1} \\
(ii) \ \ \ \ \ &\displaystyle\prod_{n \geq 1} \left(\frac{(n+a)(2n+a+1)^2}{(2n+a)(2n+a+2)(n+a+1)}
\right)^{(-1)^{t_n}}
&= \displaystyle\frac{a+2}{a+1} \\
(iii) \ \ \ \ \ &\displaystyle\prod_{n \geq 1} \left(\frac{(2n+2a)(2n+a+1)}{(2n+a)(2n+1)}\right)^{(-1)^{t_n}}
&= \displaystyle\frac{1}{a+1} \cdot
\end{array}
$$
and, for $a \in {\mathbb C} \setminus \{0, -1, -2, -3, \ldots \} \cup \{-1/2, -3/2, -5/2, \ldots \})$,
$$ 
(iv) \ \ \ \ \ \prod_{n \geq 1}\left(\frac{(2n+a+1)(2n+2a-1)}{(2n+a)(2n+4a-2)}
\right)^{(-1)^{t_n}}
= \frac{2a}{a+1} \cdot \ \ \ \ \ \ \ \ \ \ 
$$
\end{corollary}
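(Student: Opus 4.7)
The plan is to derive each of (i)--(iv) as a direct consequence of Theorem \ref{f-g}. Convergence is automatic by Lemma \ref{num-den}: in every one of the four rational functions the numerator and denominator have equal degree $d$ and equal leading coefficient $2^d$. The general strategy is then to factor the integrand as a product of ratios of the form $(n+\alpha)/(n+\beta)$, rewriting each $2n+\gamma$ as $2(n+\gamma/2)$ so that the powers of $2$ cancel between numerator and denominator. Each infinite product then becomes a combination of $f$-values, hence by Theorem \ref{f-g} a product of ratios $g(\alpha)/g(\beta)$, and one or two applications of the functional equation $(1+x)g(x) = g(x/2)/g((x+1)/2)$ collapse the expression to the claimed constant.

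Concretely, for (i) I would use the decomposition
$$\frac{(n+a)(2n+a+1)(2n+b)}{(2n+a)(n+b)(2n+b+1)} = \frac{n+a}{n+b} \cdot \frac{n+(a+1)/2}{n+a/2} \cdot \frac{n+b/2}{n+(b+1)/2},$$
giving $f(a,b)\, f((a+1)/2, a/2)\, f(b/2, (b+1)/2)$; applying the functional equation at $x=a$ and $x=b$ eliminates $g(a)$ and $g(b)$ and yields $(1+b)/(1+a)$. Identity (ii) then follows immediately by specializing $b = a+1$ in (i). For (iii), the same recipe with the decomposition $\frac{n+a}{n+a/2} \cdot \frac{n+(a+1)/2}{n+1/2}$, combined with $g(1/2) = 1$ and the functional equation at $x=a$, collapses the product to $1/(a+1)$.

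The only mildly subtle case is (iv). Here the key observation is $2n+4a-2 = 2(n+2a-1)$, which suggests the decomposition
$$\frac{(2n+a+1)(2n+2a-1)}{(2n+a)(2n+4a-2)} = \frac{n+(a+1)/2}{n+a/2} \cdot \frac{n+(2a-1)/2}{n+2a-1}.$$
The first factor contributes $g((a+1)/2)/g(a/2) = 1/((1+a)g(a))$ via the functional equation at $x=a$, while the second contributes $g((2a-1)/2)/g(2a-1) = 2a \cdot g(a)$ via the functional equation at $x=2a-1$; multiplying yields $2a/(a+1)$. The excluded values of $a$ in the statement (the non-positive integers together with the negative half-integers) are exactly what is needed to ensure that every $g$-value in sight is defined and that the factor $2a$ is nonzero.

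The main obstacle, such as it is, lies in spotting the right regrouping for (iv): one must see $4a-2$ as $2(2a-1)$ and then realize that the appropriate instance of the functional equation is the one at $x=2a-1$, not at some value directly related to $a$. Once the decompositions are fixed, all four identities reduce to a mechanical cancellation in the group generated by the $g$-values, and no new input beyond Theorem \ref{f-g} is required.
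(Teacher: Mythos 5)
Your proof is correct and follows essentially the same route as the paper: each product is rewritten as a combination of $f$-values, hence ratios of $g$-values via Theorem~\ref{f-g}, and collapsed using the functional equation $(1+x)g(x)=g(x/2)/g((x+1)/2)$. The only cosmetic difference is that the paper obtains (iii) and (iv) simply by specializing (i) at $b=0$ and $b=2a-1$, whereas you rederive them directly; your computations (including the use of the functional equation at $x=2a-1$ for (iv)) are equivalent to those specializations.
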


\begin{proof}
(i) is proved by writing its left side, say $A$, in terms of values of $f$ and applying 
Theorem~\ref{f-g}:
$$
A= f(a,\tfrac{a}{2})f(\tfrac{{a+1}}{2},b)f(\tfrac{b}{2}, \tfrac{b+1}{2})
= \frac{g(a)}{g(\frac{a}{2})} \frac{g(\frac{a+1}{2})}{g(b)} \frac{g(\frac{b}{2})}{g(\frac{b+1}{2})}
=  \frac{b+1}{a+1} \cdot
$$

(ii) is obtained from (i) by taking $b = a+1$.

(iii) is obtained from (i) by taking $b=0$.

(iv) is obtained from (i) by taking $b = 2a-1$. 
\end{proof}

\bigskip

We give examples with particular values of the parameters in the next corollary.

\begin{corollary}\label{examples}
We have the following equalities.
$$
\begin{array}{lll}
&(a) \ \ \ 
\displaystyle\prod_{n \geq 0} \left(\frac{2n+1}{2n+2}\right)^{(-1)^{t_n}} 
= \displaystyle\frac{\sqrt{2}}{2} \ \ 
\mbox{\rm (W.-R.)} 
\ \ \ \ \  &(b) \ \ \ \displaystyle\prod_{n \geq 0} \left(\frac{4n+1}{4n+3}\right)^{(-1)^{t_n}} 
= \displaystyle\frac{1}{2} 
\\
&(c) \ \ \ 
\displaystyle\prod_{n \geq 1} \left(\frac{(2n-1)(4n+1)}{(2n+1)(4n-1)}\right)^{(-1)^{t_n}} = 2 
\ \ \ \ \ 
&(d) \ \ \
\displaystyle\prod_{n \geq 0}\left(\frac{(n+1)(2n+1)}{(n+2)(2n+3)}\right)^{(-1)^{t_n}} = \frac{1}{2}
\\
&(e) \ \ \
\displaystyle\prod_{n \geq 0} \left(\frac{(2n+2)(4n+3)}{(2n+3)(4n+5)}\right)^{(-1)^{t_n}} =
\frac{\sqrt{2}}{2}
\ \ \ \ \ 
&(f) \ \ \
\displaystyle\prod_{n \geq 0} \left(\frac{(n+1)(4n+5)}{(n+2)(4n+3)}\right)^{(-1)^{t_n}} = 1 
\\
&(g) \ \ \
\displaystyle\prod_{n \geq 0} \left(\frac{(n+1)(2n+2)}{(n+2)(2n+3)}\right)^{(-1)^{t_n}} =
\frac{\sqrt{2}}{2} 
\ \ \ \ \ 
&(h) \ \ \
\displaystyle\prod_{n \geq 0} \left(\frac{(n+1)(4n+5)}{(n+2)(4n+1)}\right)^{(-1)^{t_n}} = 2
\\
&(i) \ \ \
\displaystyle\prod_{n \geq 0} \left(\frac{(2n+2)(4n+1)}{(2n+3)(4n+5)}\right)^{(-1)^{t_n}} = 
\frac{\sqrt{2}}{4}
\ \ \ \ \ 
&(j) \ \ \
\displaystyle\prod_{n \geq 0} \left(\frac{(2n+1)(4n+1)}{(2n+3)(4n+5)}\right)^{(-1)^{t_n}} = 
\frac{1}{4}
\\
&(k) \ \ \ 
\displaystyle\prod_{n \geq 0} \left(\frac{(4n+1)(8n+7)}{(4n+2)(8n+3)}\right)^{(-1)^{t_n}} = 1 
\ \ \ \ \
&(l) \ \ \ 
\displaystyle\prod_{n \geq 0} \left(\frac{(8n+1)(8n+7)}{(8n+3)(8n+5)}\right)^{(-1)^{t_n}} = 
\frac{1}{2}  \cdot
\\
\end{array}
$$
\end{corollary}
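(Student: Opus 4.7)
The plan is to derive each of the twelve identities (a)--(l) as a consequence of Corollary~\ref{general-examples} (or, equivalently, of Theorem~\ref{f-g} applied directly), by choosing the parameters appropriately and, when the index starts at $n = 0$, accounting for the $n = 0$ factor of the product separately, since $f(a, b)$ and the identities of Corollary~\ref{general-examples} are indexed from $n \geq 1$.

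For each identity I would first factor the rational function $R(n)$ as a product of simple linear ratios $(n + a_j)/(n + b_j)$, so that the tail $\prod_{n \geq 1} R(n)^{(-1)^{t_n}}$ becomes a product of values $f(a_j, b_j)$. I would then apply Theorem~\ref{f-g} to rewrite this as a ratio of values of $g$, and use the functional equation $(1 + x) g(x) = g(x/2)/g((x+1)/2)$, together with the base values $g(1/2) = 1$ and $g(1) = 1/\sqrt{2}$, to evaluate those $g$-values. For example, (a) is just $(1/2) \cdot f(1/2, 1) = (1/2) \cdot g(1/2)/g(1) = \sqrt{2}/2$; (c) is exactly Corollary~\ref{general-examples}(iii) with $a = -1/2$; and (b) is Corollary~\ref{general-examples}(iv) with $a = 1/2$, multiplied by the $n = 0$ factor $1/3$. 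The remaining entries follow from similar substitutions into (i)--(iv), sometimes combined with one or two applications of the functional equation at $x = 1/2$ (which gives $g(1/4)/g(3/4) = 3/2$), at $x = 3/2$ (giving $g(3/4)/g(5/4) = (5/2)\, g(3/2)$), at $x = 2$ (giving $g(1)/(g(2)\, g(3/2)) = 3$), and at $x = 1/4$, $x = 3/4$ for the entries (k) and (l) in which factors of the form $8n + c$ appear.

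The main obstacle is purely the bookkeeping: for each of the twelve rational functions one must identify the correct substitution into the appropriate member of Corollary~\ref{general-examples}, correctly split off the $n = 0$ factor when the index begins at zero, and verify that, after all applications of the functional equation, the remaining values of $g$ cancel so that only the known constants $g(1/2) = 1$ and $g(1) = 1/\sqrt{2}$ survive. No new analytic ideas beyond Theorem~\ref{f-g} are required.
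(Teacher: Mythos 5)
Your proposal is correct and follows essentially the same route as the paper: each identity is obtained by substituting particular parameter values into Corollary~\ref{general-examples} (equivalently, into Theorem~\ref{f-g} and the functional equation for $g$), adjusting for the $n=0$ factor, and combining previously derived identities; the substitutions you make explicit, e.g.\ $a=1/2$ in (iv) for (b) and $a=-1/2$ in (iii) for (c), check out (indeed your $a=-1/2$ for (c) is the correct choice, where the paper's printed ``$a=\tfrac12$'' appears to be a typo). The remaining entries are, as you say, bookkeeping of exactly the kind the paper carries out.
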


\begin{proof}
Corollary~\ref{general-examples}~(ii) with $a = 0$ yields
$$\prod_{n \geq 1}\left(\frac{(2n+1)^2}{(2n+2)^2}\right)^{(-1)^{t_n}} = 2.$$
Taking the square root, and multiplying by the value of $\frac{2n+1}{2n+2}$ for $n=0$, we obtain
the Woods-Robbins identity (a).

\medskip

\noindent
Corollary~\ref{general-examples}~(iii) with $a = \frac{1}{2}$ gives
$$\prod_{n \geq 1}\left(\frac{2n+\frac{3}{2}}{2n+\frac{1}{2}}\right)^{(-1)^{t_n}}
= \frac{2}{3}  \cdot $$
This implies (b) (note the different range of multiplication again).

\medskip

\noindent
Corollary~\ref{general-examples}~(iii) with $a = \frac{1}{2}$ gives
$$\prod_{n \geq 1}
\left(\frac{(2n-1)(2n + \frac{1}{2})}{(2n-\frac{1}{2})(2n+1)}\right)^{(-1)^{t_n}} = 2,$$
which implies (c).

\medskip

\noindent
Corollary~\ref{general-examples}~(i) with $a = 1$ and $b=2$ yields
$$\prod_{n \geq 1}\left(\frac{(n+1)(2n+2)^2}{(2n+1)(n+2)(2n+3)}\right)^{(-1)^{t_n}} =
\frac{3}{2} \cdot$$ 
We obtain (d) after multiplying by the factor corresponding to $n=0$, then by
the square of $\prod_{n \geq 0}\left(\frac{2n+1}{2n+2}\right)^{(-1)^{t_n}}$ (this square is equal to
$\frac{1}{2}$ from the identity of Woods and Robbins).

\medskip

\noindent
Corollary~\ref{general-examples}~(i) with $a=1$ and $b=\frac{3}{2}$ yields
$$\prod_{n \geq 1}
\left(\frac{(n+1)(2n+2)(2n+\frac{3}{2})}{(2n+1)(n+\frac{3}{2})(2n+\frac{5}{2})}\right)^{(-1)^{t_n}} 
= \frac{5}{4} \cdot$$ 
Equality~(e) is then obtained by multiplying by the factor corresponding to 
$n=0$ and then multiplying by $\prod_{n \geq 0}\left(\frac{2n+1}{2n+2}\right)^{(-1)^{t_n}}$,
(which again is equal to $\frac{\sqrt{2}}{2}$).

\medskip

\noindent
Corollary~\ref{general-examples}~(i) with $a=2$ and $b=\frac{3}{2}$ gives
$$\prod_{n \geq 1}
\left(\frac{(n+2)(2n+3)(2n+\frac{3}{2})}{(2n+2)(n+\frac{3}{2})(2n+\frac{5}{2})}\right)^{(-1)^{t_n}}
= \frac{5}{6} \cdot$$ 
We simplify by $(2n+3)$, multiply by the factor corresponding to $n=0$, and we
obtain (the inverse of) Equality (f).

\medskip

\noindent
Corollary~\ref{general-examples}(ii) with $a=1$ gives (g) with the usual manipulations (multiplying 
by the factor for $n=0$ and by $\prod_{n \geq 0}\left(\frac{2n+1}{2n+2}\right)^{(-1)^{t_n}}$).
Alternatively (g) can be obtained by multiplying (e) and (f). 

\medskip

\noindent
Equality~(h) is obtained by dividing (f) by (b). The inverse of Equality~(i) is obtained by dividing 
(h) by (g). Equality~(j) is obtained by multiplying (i) 
by $\prod_{n \geq 0}\left(\frac{2n+1}{2n+2}\right)^{(-1)^{t_n}}$,
(which is equal to $\frac{\sqrt{2}}{2}$).

\medskip

\noindent
Corollary~\ref{general-examples}~(iv) with $a = \frac{3}{4}$ yields
$$\prod_{n \geq 1}\left(\frac{(2n+\frac{7}{4})(2n+\frac{1}{2})}{(2n+\frac{3}{4})(2n+1)}\right)^{(-1)^{t_n}} = \frac{6}{7},$$ which implies (k).

\medskip

\noindent
Corollary~\ref{general-examples}~(i) with $a=\frac{3}{4}$ and $b=\frac{1}{4}$ 
gives (l) 
(multiply by the factor corresponding to $n=0$ and use (b)). 
\end{proof}

\begin{remark}

The proofs that we give, e.g., in Corollary~\ref{examples}, provide infinite products whose
values are rational: in the case of the Woods-Robbins infinite product 
$P = \prod_{n \geq 0} \left(\frac{2n+1}{2n+2}\right)^{(-1)^{t_n}}$ we actually obtain the value of
$P^2$ ($= 1/2$). We finally get $\frac{\sqrt{2}}{2}$ only because the product we first obtain involves
the square of a rational function.

\end{remark}

\section{More remarks on the function $g$}\label{FM}

As we have seen above, the function $g$ defined by $g(x) := \frac{f(\frac{x}{2}, \frac{x+1}{2})}{x+1}$
has the property that $f(a,b) = \frac{g(a)}{g(b)}$. It satisfies the functional equation
$\frac{g(\frac{x}{2})}{g(\frac{x+1}{2})} = (1+x)g(x)$ for $x$ not equal to a negative integer.
This functional equation has some resemblance with the
celebrated {\em duplication formula} for the
$\Gamma$ function:
$\Gamma(\frac{z}{2})\Gamma(\frac{z+1}{2}) = 2^{1-z} \sqrt{\pi}\, \Gamma(z)$.
 
\bigskip

We also point out the cancellation of $g(0)$ when we computed $g(1/2)$.
In particular,
we have not been able to give the value of $g(0)$ in terms of known 
constants.
The quantity $g(0) = \displaystyle\prod_{n \geq 1} \left(\tfrac{2n}{2n+1}\right)^{(-1)^{t_n}}$
already appeared in a paper of Flajolet and Martin \cite{FlaMar}: more precisely they are concerned with
the constant
$$
\varphi := 2^{-1/2}e^{\gamma}\frac{2}{3} R, \ \mbox{\rm where } 
R := \prod_{n \geq 1} \left(\frac{(4n+1)(4n+2)}{4n(4n+3)}\right)^{(-1)^{t_n}},$$
and it easily follows that
$$
\varphi = \frac{2^{-1/2} e^{\gamma}}{g(0)} \cdot 
$$

\bigskip

Finally, we will prove that the function $g$ is decreasing. Actually we have the stronger result
given in Theorem~\ref{decreasing} below. We first state and slighty extend a lemma 
(Lemmas~3.2 and 3.3 from \cite{AllCohen}).

\begin{lemma}\label{lemmas-all-cohen}
For every function $G$, define the operator ${\cal T}$ by ${\cal T}G(x) := G(2x)  - G(2x+1)$.
Let ${\mathcal A} = \{G : {\mathbb R}^+ \to {\mathbb R}, \  G \ \mbox{is } C^{\infty}, \
\forall x \geq 0, \ (-1)^r G^{(r)}(x) > 0\}$. Then

\begin{itemize}
\item for all $k \geq 0$,one has $T^k{\mathcal A} \subset {\mathcal A}$. Furthermore, if $G$ 
belongs to ${\mathcal A}$;

\item if the series $\sum_{n \geq 0} {\cal T}G(n)$ converges, then all the series 
$\sum_{n \geq 0} T^k G(n)$ converge and $R(n,T^kG) := \sum_{j \geq n} (-1)^{t_j} T^kG(j)$ 
has the sign of $(-1)^{t_n}$.
\end{itemize}
\end{lemma}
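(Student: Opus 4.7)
The plan is to dispatch the three claims bundled in the lemma separately: (i) the operator $\mathcal{T}$ sends $\mathcal{A}$ into itself; (ii) the series $\sum_{n \geq 0} \mathcal{T}^k G(n)$ converges for every $k \geq 0$; (iii) the tail $R(n, \mathcal{T}^k G)$ has the sign of $(-1)^{t_n}$. For (i), I would differentiate directly:
$$(\mathcal{T}G)^{(r)}(x) = 2^r\bigl[G^{(r)}(2x) - G^{(r)}(2x+1)\bigr].$$
Setting $H_r := (-1)^r G^{(r)}$, the defining property of $\mathcal{A}$ gives $H_r > 0$ on $[0,\infty)$ and $H_r' = (-1)^r G^{(r+1)} = -H_{r+1} < 0$, so $H_r$ is strictly decreasing. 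Hence $H_r(2x) > H_r(2x+1)$, which is exactly $(-1)^r(\mathcal{T}G)^{(r)}(x) > 0$. Iterating yields $\mathcal{T}^k\mathcal{A} \subset \mathcal{A}$ for every $k$.

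For (ii), I would proceed by induction on $k$: the case $k=1$ is the hypothesis. Assuming $\sum_{n \geq 0} \mathcal{T}^k G(n) < \infty$, the fact that $\mathcal{T}^k G \in \mathcal{A}$ is positive and decreasing combined with
$$\mathcal{T}^{k+1}G(n) = \mathcal{T}^k G(2n) - \mathcal{T}^k G(2n+1)$$
shows the terms are nonnegative and bounded above by $\mathcal{T}^k G(2n)$. Summing,
$$\sum_{n \geq 0}\mathcal{T}^{k+1}G(n) \leq \sum_{n \geq 0}\mathcal{T}^k G(2n) \leq \sum_{m \geq 0}\mathcal{T}^k G(m) < \infty.$$

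For (iii), set $H := \mathcal{T}^k G$, which belongs to $\mathcal{A}$ by step (i). The engine is the pairing identity obtained from $(-1)^{t_{2m}} = (-1)^{t_m}$ and $(-1)^{t_{2m+1}} = -(-1)^{t_m}$, namely
$$R(2p,H) = \sum_{m \geq p}(-1)^{t_m}\bigl[H(2m) - H(2m+1)\bigr] = R(p,\mathcal{T}H),$$
together with the telescoping relation $R(2p+1,H) = R(2p,H) - (-1)^{t_{2p}}H(2p)$. I would then prove by strong induction on $n$ the sign property together with a companion size estimate of the form $|R(n,H)| < H(n-1)$ for $n \geq 1$: the bound is precisely what makes the odd case go through, because whenever $t_n \neq t_{n+1}$ the single term $(-1)^{t_n}H(n)$ must dominate the tail $R(n+1,H)$ in order to flip the sign correctly.

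The main obstacle is exactly this last step. The bare sign claim does not propagate inductively on its own; it has to be coupled with a quantitative tail bound, and choosing that bound so that both the even and the odd reductions close up requires the full completely-monotonic structure of $H$, not merely positivity and monotonicity. Since this is essentially the content of Lemmas~3.2--3.3 of \cite{AllCohen}, I would invoke their argument verbatim, having verified via step (i) that each $H = \mathcal{T}^k G$ satisfies the required hypotheses.
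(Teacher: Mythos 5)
Your proposal is correct and follows essentially the same route as the paper, whose entire proof is a citation of Lemmas~3.2--3.3 of \cite{AllCohen} plus the remark that the sign assertion for general $k$ follows from the case $k=0$ applied to $\mathcal{T}^kG$ --- precisely the reduction your step (i) justifies. Your explicit verifications of the stability $\mathcal{T}^k\mathcal{A}\subset\mathcal{A}$ and of the convergence of $\sum_n\mathcal{T}^kG(n)$ are sound, and you correctly locate the only delicate point (coupling the sign claim with a quantitative tail bound such as $|R(p,\mathcal{T}H)|\le\sum_{j\ge p}\mathcal{T}H(j)\le H(2p)$) in the cited reference, exactly where the paper leaves it.
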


\begin{proof}
See \cite[Lemmas~3.2 and 3.2]{AllCohen} where everything is proved, except that the
last assertion about the sign of $R(n, T^k G)$ is stated only for $k=0$, but clearly holds
for all $k \geq 0$. 
\end{proof}

\begin{theorem}\label{decreasing}
The function $x \to f(\frac{x}{2}, \frac{x+1}{2})$ is decreasing on the nonnegative real numbers.
\end{theorem}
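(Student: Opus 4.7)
The plan is to apply Lemma~\ref{lemmas-all-cohen} to the logarithmic derivative of $h(x) := f(\tfrac{x}{2}, \tfrac{x+1}{2})$. First I would simplify
$$
h(x) = \prod_{n \geq 1} \left(\frac{n + x/2}{n + (x+1)/2}\right)^{(-1)^{t_n}} = \prod_{n \geq 1} \left(\frac{2n+x}{2n+x+1}\right)^{(-1)^{t_n}},
$$
so that $\log h(x) = \sum_{n \geq 1} (-1)^{t_n}\bigl[\log(2n+x) - \log(2n+x+1)\bigr]$. Since this series and the series of term-by-term derivatives both converge uniformly on compact subsets of $(0,\infty)$ (the $n$-th derivative term is $O(1/n^2)$), differentiating gives
$$
\frac{d}{dx}\log h(x) = \sum_{n \geq 1} (-1)^{t_n} \left(\frac{1}{2n+x} - \frac{1}{2n+x+1}\right).
$$

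Next I would fit this into the framework of Lemma~\ref{lemmas-all-cohen}. For fixed $x > 0$, define $H_x : \mathbb{R}^+ \to \mathbb{R}$ by $H_x(y) := 1/(y+x)$. Then $H_x^{(r)}(y) = (-1)^r r!/(y+x)^{r+1}$, so $(-1)^r H_x^{(r)}(y) > 0$ for every $r \geq 0$ and every $y \geq 0$, which places $H_x$ in the class $\mathcal{A}$. Moreover $\mathcal{T}H_x(n) = H_x(2n) - H_x(2n+1) = \tfrac{1}{2n+x} - \tfrac{1}{2n+x+1}$, so
$$
\frac{d}{dx}\log h(x) = \sum_{n \geq 1} (-1)^{t_n}\, \mathcal{T}H_x(n) = R(1, \mathcal{T}H_x),
$$
in the notation of the lemma.

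Since the series $\sum_{n \geq 0}\mathcal{T}H_x(n)$ clearly converges, Lemma~\ref{lemmas-all-cohen} (applied with $G = H_x$ and $k = 1$) tells us that $R(1, \mathcal{T}H_x)$ has the sign of $(-1)^{t_1} = -1$. Hence $\frac{d}{dx}\log h(x) < 0$ for all $x > 0$, so $h$ is strictly decreasing on $(0,\infty)$, and by continuity on $[0,\infty)$.

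The main obstacle I anticipate is a bookkeeping one rather than a conceptual one: the lemma is stated for functions on $\mathbb{R}^+$ and the sum of interest runs over $n \geq 1$, while $H_x(y) = 1/(y+x)$ is only well-behaved on $[0,\infty)$ when $x > 0$. Handling $x = 0$ by passing to the limit (or, equivalently, by replacing $H_x$ with the shifted function $y \mapsto 1/(y+x+1)$ and reindexing) resolves this without affecting the sign of the derivative.
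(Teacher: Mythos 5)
Your proof is correct, and it runs on the same engine as the paper's --- Lemma~\ref{lemmas-all-cohen} with $k=1$, plus the observation that $(-1)^{t_1}=-1$ forces the relevant tail sum $R(1,{\cal T}G)$ to be negative --- but you feed the lemma a different test function. The paper takes $G(y)=\log\frac{y+a}{y+b}$ with $a>b\geq 0$ (so that $G\in{\mathcal A}$) and reads off $\sum_{n\geq 1}(-1)^{t_n}{\cal T}G(n)=\log\bigl(f(\tfrac a2,\tfrac{a+1}2)/f(\tfrac b2,\tfrac{b+1}2)\bigr)<0$ directly, comparing two values of the function without ever differentiating; you instead take $H_x(y)=1/(y+x)$ (which is exactly the $\partial_a$-derivative of the paper's $G$) and show $(\log h)'(x)=R(1,{\cal T}H_x)<0$. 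The paper's route avoids justifying any interchange of differentiation and summation and needs no discussion of the endpoint $x=0$; yours yields the marginally stronger conclusion that the logarithmic derivative is everywhere negative and sidesteps having to track which of $a,b$ is larger when checking membership in ${\mathcal A}$. Your term-by-term differentiation is legitimate --- the differentiated series is dominated by $1/(4n^2)$ uniformly in $x\geq 0$, and the undifferentiated series converges pointwise by Lemma~\ref{num-den} --- though note that the undifferentiated series is only $O(1/n)$ termwise, so its convergence rests on the bounded partial sums of $(-1)^{t_n}$, not on the $O(1/n^2)$ bound you cite. One small caveat: the ``reindexing'' fix you sketch for $x=0$ does not work as stated, since shifting $n\mapsto n+1$ replaces the weight $(-1)^{t_n}$ by $(-1)^{t_{n+1}}$, which is not what the lemma's $R(n,\cdot)$ uses; but your other fix --- continuity of $h$ at $0$ combined with strict decrease on $(0,\infty)$ --- is all that is needed.
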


\begin{proof}
$G(x) = \log \frac{x + a}{x+b}$ for $x \geq 0$. 
Then, $G^{(r)}(x) = (-1)^{r-1}(r-1)!((x+a)^{-r} - (x+b)^{-r})$ for $r \geq 1$ so that $G$ belongs
to ${\mathcal A}$. Now applying Lemma~\ref{lemmas-all-cohen}  to ${\cal T}G$ and $n = 1$ yields
$\sum_{n \geq 1} (-1)^{t_n} (\log\tfrac{2n+a}{2n+b} - \log\tfrac{2n+1+a}{2n+1+b}) <  0$, which
is the same as saying that  $\tfrac{f(\frac{a}{2}, \frac{a+1}{2})}{f(\frac{b}{2}, \frac{b+1}{2})} < 1$.
\end{proof}

\section{Products of the form $\prod R(n)^{t_n}$}

We let again $(t_n)_{n \geq 0} = 0 \ 1 \ 1 \  0 \  1 \  0 \  0 \ 1 \  1 \  0 \  0 \  1 \ldots$ denote the 
0-1-Thue-Morse sequence. We have seen that several infinite products of the form 
$\prod R(n)^{(-1)^{t_n}}$ admit a closed-form expression, but it might seem more natural (or 
at least desirable) to have results for infinite products of the form $\prod R(n)^{t_n}$. Our first
result deals with the convergence of such products.

\begin{lemma}\label{num-den-0-1}
Let $t_n$ be the sum, reduced modulo $2$, of the binary digits of the integer $n$. Let 
$R \in {\mathbb C}(X)$ be a rational function such that the values $R(n)$ are defined for $n \geq 1$. 
Then the infinite product \, $\prod_n R(n)^{t_n}$ converges if and only if the numerator and 
the denominator of $R$ have the same degree, the
same leading coefficient, and the same sum of
roots (in ${\mathbb C}$).
\end{lemma}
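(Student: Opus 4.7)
The plan is to follow the same pattern as Lemma~\ref{num-den}, but to track one additional term in the asymptotic expansion of $\log R(n)$. In the $\pm 1$ setting of Lemma~\ref{num-den}, the sign $(-1)^{t_n}$ produces enough cancellation after pairing consecutive indices that a $O(1/n)$ tail in $\log R(n)$ is admissible; in the $\{0,1\}$ setting this cancellation is absent, and the extra condition on the sum of the roots is exactly what kills the leading $1/n$ contribution.

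For the necessary direction, note first that since $\{n : t_n = 1\}$ has natural density $1/2$, convergence of the product forces $R(n) \to 1$, hence equal degrees and equal leading coefficients for the numerator $P$ and denominator $Q$. Writing $P(X) = c(X^d - \sigma_P X^{d-1} + \cdots)$ and $Q(X) = c(X^d - \sigma_Q X^{d-1} + \cdots)$, where $\sigma_P,\sigma_Q$ denote the sums of the roots, a direct expansion at infinity yields
\[
\log R(n) \;=\; \frac{\sigma_P - \sigma_Q}{n} \,+\, O\!\left(\frac{1}{n^{2}}\right).
\]

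The heart of the argument is the divergence of $\sum_{n \geq 1} t_n/n$. Using the identity $t_n = \tfrac{1}{2}(1 - (-1)^{t_n})$, one has
\[
\sum_{n=1}^{N} \frac{t_n}{n} \;=\; \frac{1}{2}\sum_{n=1}^{N} \frac{1}{n} \,-\, \frac{1}{2}\sum_{n=1}^{N} \frac{(-1)^{t_n}}{n}.
\]
The first sum grows like $\tfrac{1}{2}\log N$, while the second converges: applying Lemma~\ref{num-den} to $R(X) = X/(X+1)$ gives convergence of $\prod (n/(n+1))^{(-1)^{t_n}}$, and expanding $\log(n/(n+1)) = -1/n + O(1/n^{2})$ isolates $\sum (-1)^{t_n}/n$ as a convergent series. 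Hence $\sum t_n/n$ diverges. Since the $O(1/n^{2})$ remainder in the expansion of $\log R(n)$ contributes an absolutely convergent series, convergence of $\sum t_n \log R(n)$ forces $\sigma_P = \sigma_Q$, that is, $P$ and $Q$ have the same sum of roots.

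For the converse direction, once the three conditions hold we have $\log R(n) = O(1/n^{2})$, so $\sum t_n \log R(n)$ converges absolutely and the product converges. I expect the only genuinely delicate point to be the clean justification that $\sum t_n/n$ diverges; everything else is routine asymptotic expansion, parallel to the proof of Lemma~\ref{num-den}.
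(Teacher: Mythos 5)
Your argument is correct, but it is assembled differently from the paper's. The paper never expands $\log R(n)$: it applies the identity $2t_n = 1-(-1)^{t_n}$ at the level of the products, writing $\prod R(n)^{2t_n} = \prod R(n)\, \big/ \prod R(n)^{(-1)^{t_n}}$, handling the second factor by Lemma~\ref{num-den} and the first by the classical criterion that $\prod R(n)$ converges exactly when degrees, leading coefficients, and sums of roots agree; both directions of the equivalence then follow by combining convergent products. You instead inline the analysis: you expand $\log R(n)$ to order $1/n$ and isolate the divergence of $\sum t_n/n$ as the key fact, proving it from the same identity $2t_n=1-(-1)^{t_n}$ together with the convergence of $\sum (-1)^{t_n}/n$, which you correctly extract from Lemma~\ref{num-den} applied to $X/(X+1)$. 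The paper's route buys brevity by treating the classical $\prod R(n)$ criterion as a black box; yours buys transparency, showing exactly where the sum-of-roots hypothesis enters (it is the coefficient of $1/n$) and making explicit that $\sum_{n\le N} t_n/n \sim \tfrac12 \log N$. Two minor points: the coefficient in your expansion should be $(\sigma_Q-\sigma_P)/n$ rather than $(\sigma_P-\sigma_Q)/n$ (harmless, since the conclusion $\sigma_P=\sigma_Q$ is symmetric), and in the necessary direction it is worth saying explicitly that convergence only forces $R(n)\to 1$ along the subsequence $\{n: t_n=1\}$, which suffices because a rational function has a limit (possibly infinite) as $n\to\infty$.
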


\begin{proof}
If the infinite product $\prod R(n)^{t_n}$ converges, then $R(n)$ must tend to $1$ when 
$n$ tends to infinity (on the subsequence for which $t_n=1$). Hence the numerator and 
denominator of $R$ have the same degree and the same leading coefficient. But then, as 
we have seen, the product $\prod R(n)^{(-1)^{t_n}}$ converges, and so does the product 
$\prod R(n)^{2t_n + (-1)^{t_n}}$. But this product is equal to $\prod R(n)$,
which is known to
converge if and only if the sum of the roots of the numerator is equal to the 
sum of the roots of the 
denominator. 

Now if the numerator and denominator of $R$ have the same degree,
the same leading coefficient, 
and the same sum of roots, then both infinite products $\prod R(n)^{(-1)^{t_n}}$ and $\prod R(n)$
converge, which implies the convergence of the infinite product
$\prod R(n)^{1-2t_n} = \prod R(n)^{(-1)^{t_n}}$. 
\end{proof}

\bigskip

Now we give three equalities for products of the form $\prod R(n)^{t_n}$.

\begin{theorem}\label{three}
The following three equalities hold:
\begin{equation}
\prod_{n \geq 0} \left(\frac{(4n+1)(4n+4)}{(4n+2)(4n+3)}\right)^{t_n} = \frac{\pi^{3/4} \sqrt{2}}{\Gamma(1/4)} 
\end{equation}

\begin{equation}
\prod_{n \geq 0} \left(\frac{(n+1)(4n+5)}{(n+2)(4n+1)}\right)^{t_n} = \sqrt{2}
\end{equation}

\begin{equation}
\prod_{n \geq 0} \left(\frac{(8n+1)(8n+7)}{(8n+3)(8n+5)}\right)^{t_n}
=  \sqrt{2\sqrt{2} - 2}.
\end{equation}
\end{theorem}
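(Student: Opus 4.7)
The key idea is that $t_n = \tfrac{1-(-1)^{t_n}}{2}$, so once Lemma~\ref{num-den-0-1} has verified convergence (a quick check of sums of roots for each rational function), one has formally
$$
\prod_{n\geq 0} R(n)^{t_n} \;=\; \left(\frac{\prod_{n\geq 0} R(n)}{\prod_{n\geq 0} R(n)^{(-1)^{t_n}}}\right)^{\!1/2}.
$$
This reduces each of the three identities to evaluating an ordinary (non-Thue-Morse) product on top and a Thue-Morse product on the bottom. The bottoms are already in hand from Corollary~\ref{examples}, and the tops are telescoping-in-the-Gamma-function.

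For the ordinary products, I would use $\prod_{n=0}^{N-1}(kn+a) = k^N \Gamma(N+a/k)/\Gamma(a/k)$ together with the Stirling asymptotic $\Gamma(N+\alpha)/\Gamma(N+\beta)\sim N^{\alpha-\beta}$. The hypothesis that numerator and denominator of $R$ have equal degree, equal leading coefficient, and equal sum of roots makes all the growth in $N$ cancel, leaving a clean ratio of Gamma values at rational arguments; the Euler reflection $\Gamma(z)\Gamma(1-z)=\pi/\sin(\pi z)$ then finishes the simplification. Concretely: for (1), $\prod_{n\geq 0}\tfrac{(4n+1)(4n+4)}{(4n+2)(4n+3)} = \tfrac{\Gamma(1/2)\Gamma(3/4)}{\Gamma(1/4)\cdot 1} = \tfrac{\pi^{3/2}\sqrt{2}}{\Gamma(1/4)^2}$ after using $\Gamma(1/4)\Gamma(3/4) = \pi\sqrt{2}$; for (2), the product telescopes partially and collapses to $\Gamma(1/4)/\Gamma(5/4) = 4$; for (3), one lands on $\Gamma(3/8)\Gamma(5/8)/(\Gamma(1/8)\Gamma(7/8)) = \sin(\pi/8)/\sin(3\pi/8) = \tan(\pi/8) = \sqrt{2}-1$.

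For the Thue-Morse halves, I would pull the needed values directly out of Corollary~\ref{examples}. For (1), factor $\tfrac{(4n+1)(4n+4)}{(4n+2)(4n+3)} = \tfrac{4n+1}{4n+3}\cdot \tfrac{2n+2}{2n+1}$ and combine (b) with the Woods–Robbins identity (a) to get $\tfrac{1}{2}\cdot\sqrt{2}=\tfrac{\sqrt2}{2}$. For (2), the Thue-Morse product is exactly (h), with value $2$. For (3), it is exactly (l), with value $1/2$.

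Assembling the pieces gives
$$
\sqrt{\frac{\pi^{3/2}\sqrt{2}/\Gamma(1/4)^2}{\sqrt{2}/2}} = \frac{\pi^{3/4}\sqrt{2}}{\Gamma(1/4)},\qquad \sqrt{4/2}=\sqrt{2},\qquad \sqrt{(\sqrt{2}-1)/(1/2)} = \sqrt{2\sqrt{2}-2},
$$
which are the three stated identities. The routine telescoping via $\Gamma$ is not an obstacle; the only substantive issue is justifying the split of the product, which hinges on convergence of both the ordinary product $\prod R(n)$ and the alternating product $\prod R(n)^{(-1)^{t_n}}$ — both granted by Lemmas~\ref{num-den} and~\ref{num-den-0-1} applied to the three specific $R$'s.
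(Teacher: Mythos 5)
Your proposal is correct and follows essentially the same route as the paper: square the product using $2t_n = 1-(-1)^{t_n}$, evaluate the ordinary product by Gamma-function telescoping plus the reflection formula, and evaluate the Thue--Morse product from Corollary~\ref{examples}. The only cosmetic difference is in part (1), where you obtain the value $\tfrac{\sqrt{2}}{2}$ of the exponent-$(-1)^{t_n}$ product by factoring the rational function and combining identities (a) and (b), whereas the paper re-derives it by splitting the Woods--Robbins product over even and odd indices; both give the same result.
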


\begin{proof}
As above we have $2t_n = 1 - (-1)^{t_n}$. Then we write
$$
\left(\prod_{n \geq 0} \left(\frac{(4n+1)(4n+4)}{(4n+2)(4n+3)}\right)^{t_n}\right)^2
= \frac
    {\displaystyle\prod_{n \geq 0} \frac{(4n+1)(4n+4)}{(4n+2)(4n+3)}}
    {\displaystyle\prod_{n \geq 0} \left(\frac{(4n+1)(4n+4)}{(4n+2)(4n+3)}\right)^{(-1)^{t_n}}}  \cdot
$$
The computation of the numerator is classical (see, e.g., \cite[Section~12-13]{ww}):
\begin{align*}
\displaystyle\prod_{n \geq 0} \frac{(4n+1)(4n+4)}{(4n+2)(4n+3)} &=
\displaystyle\prod_{n \geq 0} \frac{(n+1/4)(n+1)}{(n+1/2)(n+3/4)} = 
\frac{\Gamma(1/2)\Gamma(3/4)}{\Gamma(1/4)\Gamma(1)} =
\frac{\sqrt{\pi} \Gamma(3/4)}{\Gamma(1/4)} \\
&= \displaystyle\frac{\pi^{3/2} \sqrt{2}}{\Gamma(1/4)^2},
\end{align*}
where the last equality uses the reflection formula $\Gamma(x)\Gamma(1-x) = \pi/\sin(\pi x)$
for $x \notin {\mathbb Z}$.

\medskip

To compute the denominator, we start from the Woods-Robbins product and split the set
of indices into even and odd indices, so that
$$
\frac{\sqrt{2}}{2} = \prod_{n \geq 0}  \left(\frac{2n+1}{2n+2}\right)^{(-1)^{t_n}}
=  \prod_{n \geq 0}  \left(\frac{4n+1}{4n+2}\right)^{(-1)^{t_{2n}}}
     \left(\frac{4n+3}{4n+4}\right)^{(-1)^{t_{2n+1}}}.
$$
Using that $t_{2n} = t_n$ and $t_{2n+1} = 1 - t_n$, we thus have
$$
\frac{\sqrt{2}}{2} = \prod_{n \geq 0}  \left(\frac{(4n+1)(4n+4)}{(4n+2)(4n+3)}\right)^{(-1)^{t_n}}.
$$
Gathering the results for the numerator and for the denominator we deduce
$$
\left(\prod_{n \geq 0} \left(\frac{(4n+1)(4n+4)}{(4n+2)(4n+3)}\right)^{t_n}\right)^2
= \frac{2\pi^{3/2}}{\Gamma(1/4)^2},
$$
hence the first assertion in our theorem.

\bigskip

The proof of the second assertion goes along the same lines. We start from Equality~(h) in
Corollary~\ref{examples}
$$
\prod_{n \geq 0} \left(\frac{(n+1)(4n+5)}{(n+2)(4n+1)}\right)^{(-1)^{t_n}} = 2.
$$
Now
$$
\prod_{n \geq 0}  \left(\frac{(n+1)(4n+5)}{(n+2)(4n+1)}\right) = 
\prod_{n \geq 0}  \left(\frac{(n+1)(n+5/4)}{(n+2)(n+1/4)}\right) =
\frac{\Gamma(2)\Gamma(1/4)}{\Gamma(1)\Gamma(5/4)} = 4 .
$$
Note that this equality can also be obtained by telescopic cancellation in
the finite product  $\prod_{0 \leq n \leq N}  \left(\frac{(n+1)(4n+5)}{(n+2)(4n+1)}\right)$.

\medskip

Thus
$$
\prod_{n \geq 0} \left(\left(\frac{(n+1)(4n+5)}{(n+2)(4n+1)}\right)^{t_n}\right)^2 = 
\prod_{n \geq 0} \left(\frac{(n+1)(4n+5)}{(n+2)(4n+1)}\right)^{1-(-1)^{t_n}} = 2;
$$
hence
$$
\prod_{n \geq 0} \left(\frac{(n+1)(4n+5)}{(n+2)(4n+1)}\right)^{t_n} = \sqrt{2}.
$$

\bigskip

The proof of the third assertion is similar. We start from Equality~(l) in Corollary~\ref{examples}:
$$
\prod_{n \geq 0} \left(\frac{(8n+1)(8n+7)}{(8n+3)(8n+5)}\right)^{(-1)^{t_n}} = \frac{1}{2}  \cdot
$$
Now, as previously,
$$
\prod_{n \geq 0} \left(\frac{(8n+1)(8n+7)}{(8n+3)(8n+5)}\right) = 
\prod_{n \geq 0} \left(\frac{(n+1/8)(n+7/8)}{(n+3/8)(n+5/8)}\right) =
\frac{\Gamma(3/8)\Gamma(5/8)}{\Gamma(1/8)\Gamma(7/8)} \cdot
$$
But
$$
\frac{\Gamma(3/8)\Gamma(5/8)}{\Gamma(1/8)\Gamma(7/8)} =
\frac{\frac{\pi}{\sin(3\pi/8)}}{\frac{\pi}{\sin(\pi/8)}} = \frac{\sin(\pi/8)}{\sin(3\pi/8)}
= \frac{\sin(\pi/8)}{\cos(\pi/8)} = \tan(\pi/8) =  \sqrt{2} - 1 .
$$
From this we obtain
$$
\prod_{n \geq 0} \left(\frac{(8n+1)(8n+7)}{(8n+3)(8n+5)}\right)^{1 - (-1)^{t_n}}
=  2\sqrt{2} - 2.
$$
Thus, as claimed in the second assertion of the theorem,
$$ 
\prod_{n \geq 0} \left(\frac{(8n+1)(8n+7)}{(8n+3)(8n+5)}\right)^{t_n}
=  \sqrt{2\sqrt{2} - 2}.
$$
\end{proof}

\begin{remark}
Several other closed-form expressions for infinite products $\prod R(n)^{t_n}$ can be obtained.
For example one can use closed-form expressions for infinite products $\prod R(n)^{(-1)^{t_n}}$
where $R(n)$ satisfies the hypotheses of Lemma~\ref{num-den-0-1} and the classical result
about $\prod R(n)$. Another possibility is to start from an already known product 
$A = \prod_{n \geq 0} S(n)^{(-1)^{t_n}}$ where $S$ satisfies the hypotheses of 
Lemma~\ref{num-den} and note that (splitting the indexes into even and odd)
$$
A = \prod_{n \geq 0} (S(2n)^{(-1)^{t_{2n}}} S(2n+1)^{(-1)^{t_{2n+1}}})
= \prod_{n \geq 0} \left(\frac{S(2n)}{S(2n+1)}\right)^{(-1)^{t_n}}.
$$
As easily checked the rational function satisfies the hypotheses of Lemma~\ref{num-den-0-1}.
The reader can observe that this generalizes the method used to prove the first assertion (i.e.,
Equality~(1)) of Theorem~\ref{three}.
\end{remark}

\section{Generalization to another block counting sequence}\label{GS}

In this section we give an example of two products of the kind of those in Corollary~\ref{examples}
and in Theorem~\ref{three} that involve the Golay-Shapiro sequence (also called the 
Rudin-Shapiro sequence). Let us recall that this sequence $(v_n)_{n \geq 0}$ (in its binary version) 
can be defined as follows: $v_0 = 0$, and for all $n \geq 0$, $v_{2n} = v_n$, $v_{4n+1} = v_n$,
$v_{4n+3} = 1 - v_{2n+1}$.  Another definition is that
$v_n$ is the number, reduced modulo $2$,
of (possibly overlapping) $11$'s in the binary expansion of the integer $n$. The $\pm$ version
$((-1)^{v_n})_{n \geq 0}$ of this sequence was introduced independently the same year (1951) 
by Shapiro \cite{Shapiro} and by Golay \cite{Golay}, and rediscovered in 1959 by Rudin \cite{Rudin}
who acknowledged Shapiro's priority.

\medskip

An infinite product involving the Golay-Shapiro sequence was given in \cite{ACMFS} (also see
\cite{AS-infinite-products}):
$$
\prod_{n \geq 1} \left(\frac{(2n+1)^2}{(n+1)(4n+1)}\right)^{(-1)^{v_n}} = \frac{\sqrt{2}}{2}  \cdot 
$$
Here we prove the following theorem.

\begin{theorem}\label{gol-sha}
The following two equalities hold.
\begin{equation}
\prod_{n \geq 0} \left(\frac{4(n+2)(2n+1)^3(2n+3)^3}{(n+3)(n+1)^2(4n+3)^4}\right)^{(-1)^{v_n}} = 1
\end{equation}
\begin{equation}
\prod_{n \geq 0} \left(\frac{4(n+2)(2n+1)^3(2n+3)^3}{(n+3)(n+1)^2(4n+3)^4}\right)^{v_n} = 
\frac{16 \Gamma(3/4)^4}{\pi^6}  \cdot
\end{equation}
\end{theorem}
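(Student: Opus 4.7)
The proof will mirror that of Theorem~\ref{three}. Writing $R(n) := \frac{4(n+2)(2n+1)^3(2n+3)^3}{(n+3)(n+1)^2(4n+3)^4}$ and $u_n := (-1)^{v_n}$, the Golay--Shapiro recurrences translate into $u_{2n}=u_n$ and $u_{2n+1}=(-1)^n u_n$. The plan is first to establish the $(-1)^{v_n}$-identity~(4), and then to use $2v_n = 1 - (-1)^{v_n}$ together with the value of $\prod_{n\geq 0} R(n)$ to deduce~(5).

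For~(4), I would start from the known Golay--Shapiro product $\prod_{n\geq 1}\bigl(\tfrac{(2n+1)^{2}}{(n+1)(4n+1)}\bigr)^{u_n} = \tfrac{\sqrt{2}}{2}$ (extended to $n\geq 0$ since the $n=0$ factor is $1$). The idea is to split the index set iteratively by residue modulo $4$ (and if necessary modulo $8$), each time using the two recurrences above to re-express the exponents in terms of $u_n$ or $u_{2n+1}$, and to regroup the rational factors accordingly. By choosing the splitting so that the residual pieces indexed by the secondary subsequence $(v_{2n+1})_{n\geq 0}$ cancel in pairs, one should isolate exactly the combination $R(n)$ with the right-hand side collapsing to $1$.

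For the ordinary product, I would rewrite
$R(n) = \tfrac{(n+2)(n+1/2)^3(n+3/2)^3}{(n+3)(n+1)^2(n+3/4)^4}$.
Numerator and denominator have matching degree ($7$), leading coefficient ($1$), and sum of roots ($-8$); hence by Lemma~\ref{num-den-0-1} both $\prod R(n)$ and $\prod R(n)^{(-1)^{v_n}}$ converge. The telescoping identity
$$
\prod_{n\geq 0}\frac{\prod_j(n+a_j)^{\alpha_j}}{\prod_j(n+b_j)^{\beta_j}}
= \frac{\prod_j \Gamma(b_j)^{\beta_j}}{\prod_j \Gamma(a_j)^{\alpha_j}}
$$
(valid whenever these two convergence conditions hold) would then evaluate $\prod R(n)$ as an explicit ratio of Gamma values at the shifts $1,2,3,\tfrac12,\tfrac32,\tfrac34$; using $\Gamma(1/2)=\sqrt{\pi}$ and $\Gamma(3/2)=\sqrt{\pi}/2$ would reduce this to a closed form in $\Gamma(3/4)$ and $\pi$. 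Combining with~(4) via
$$
\Bigl(\prod_{n\geq 0} R(n)^{v_n}\Bigr)^2
= \prod_{n\geq 0} R(n)^{1-(-1)^{v_n}}
= \frac{\prod_{n\geq 0} R(n)}{\prod_{n\geq 0} R(n)^{(-1)^{v_n}}}
= \prod_{n\geq 0} R(n),
$$
and taking the positive square root would conclude~(5).

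The main obstacle is expected to be the proof of~(4). Unlike the Thue--Morse recurrence $t_{2n+1}=1-t_n$, which produces a single sign flip in a binary splitting, the Golay--Shapiro recurrence carries the position-dependent sign $(-1)^n$ in $u_{2n+1}=(-1)^n u_n$, forcing a second subsequence into the bookkeeping. The specific combination of shifts defining $R$ in~(4) is engineered precisely so that, after one or two levels of splitting, all contributions indexed by $v_{2n+1}$ cancel in pairs and only a $u_n$-exponent on $R(n)$ survives, with the constant factors collapsing to $1$ on the right. Once~(4) is in hand, the evaluation of $\prod R(n)$ and the passage to~(5) are routine Gamma-function manipulations.
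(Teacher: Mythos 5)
Your plan for deducing equality (5) from equality (4) --- square the product, use $2v_n = 1-(-1)^{v_n}$, and evaluate $\prod R(n)$ by Gamma telescoping --- is essentially the paper's route. But the substance of the theorem is equality (4), and you have not supplied a proof of it: you explicitly defer it (``one should isolate exactly the combination $R(n)$\dots''), and the strategy you sketch is not the one the paper uses and is unlikely to work as described. The paper does not start from the known product $\prod\bigl(\frac{(2n+1)^2}{(n+1)(4n+1)}\bigr)^{(-1)^{v_n}}=\frac{\sqrt2}{2}$ at all. Instead it proves, for an arbitrary rational $R$ whose numerator and denominator have equal degree and equal leading coefficient, the general identity
$$
\prod_{n\geq 1}\left(\frac{R(n)\,R(2n+1)}{R(2n)\, R(4n+1)^2}\right)^{(-1)^{v_n}} = R(1),
$$
obtained by splitting $\prod_{n\ge 1}R(n)^{(-1)^{v_n}}$ over even and odd indices, splitting the odd indices again into $4n+1$ and $4n+3$, using $v_{2n}=v_n$, $v_{4n+1}=v_n$, $v_{4n+3}=1-v_{2n+1}$, and then eliminating the awkward factor $\prod R(4n+3)^{(-1)^{v_{2n+1}}}$ by writing it as a quotient of two $v_n$-indexed products. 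Specializing $R(X)=\frac{(X+2)^2}{(X+1)(X+3)}$ yields the product in (4) over $n\geq 1$ with value $R(1)=\frac98$, and the $n=0$ factor $\frac89$ brings it to $1$. Your proposed mod-$4$/mod-$8$ resplitting of the $\frac{\sqrt2}{2}$ identity faces an immediate obstruction you do not address: the right-hand side of (4) is rational, so the $\sqrt2$ would have to cancel, and nothing in your outline explains how the cancellation ``in pairs'' is to be arranged or why it produces precisely this $R(n)$. Without the general identity, or an equivalent concrete computation, (4) remains unproved.

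A second point, on (5): your own final step gives $\prod R(n)^{v_n} = \bigl(\prod_{n\ge 0} R(n)\bigr)^{1/2}$, since the $(-1)^{v_n}$-product equals $1$. So after the Gamma-telescoping evaluation of $\prod R(n)$ you still must take a square root, and you should verify that the resulting expression actually matches the stated right-hand side $\frac{16\Gamma(3/4)^4}{\pi^6}$; as written, your outline simply asserts that this ``concludes (5)'' without performing that reconciliation, and the square root of a quotient of Gamma values is not of the same shape as the quotient itself.
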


\begin{proof}
Let $R$ be a rational function in ${\mathbb C}(X)$ such that its numerator and denominator
have same degree and same leading coefficient. Suppose furthermore that $R(n)$ is defined
for any integer $n \geq 1$. Then it is not difficult to see that the infinite product 
$\prod_{n \geq 1} R(n)^{(-1)^{v_n}}$ converges (summation by part, given the well-known
property that the partial sum $\sum_{1 \leq k \leq n} (-1)^{v_k}$ is ${\mathcal O}(\sqrt{n})$).
Now, using the recursive definition of $(v_n)_{n \geq 0}$ one has
\begin{align*}
\prod_{n \geq 1} R(n)^{(-1)^{v_n}} &=
\prod_{n \geq 1} R(2n)^{(-1)^{v_{2n}}} \prod_{n \geq 0} R(2n+1)^{(-1)^{v_{2n+1}}}
\\
&= \prod_{n \geq 1} R(2n)^{(-1)^{v_n}}
\prod_{n \geq 0} R(4n+1)^{(-1)^{v_{4n+1}}} \prod_{n \geq 0} R(4n+3)^{(-1)^{v_{4n+3}}}
\\
&= 
\prod_{n \geq 1} R(2n)^{(-1)^{v_n}}
\prod_{n \geq 0} R(4n+1)^{(-1)^{v_n}} \prod_{n \geq 0} R(4n+3)^{-(-1)^{v_{2n+1}}} .
\end{align*}
Thus
$$
\prod_{n \geq 1} \left(\frac{R(n)}{R(2n) R(4n+1)}\right)^{(-1)^{v_n}}
\prod_{n \geq 0} R(4n+3)^{(-1)^{v_{2n+1}}}  = R(1)^{(-1)^{v_1}} = R(1).
$$
But
$$
\prod_{n \geq 0} R(4n+3)^{(-1)^{v_{2n+1}}}  = 
\frac{\displaystyle\prod_{n \geq 0} R(2n+1)^{(-1)^{v_n}}}
{\displaystyle\prod_{n \geq 0} R(4n+1)^{(-1)^{v_{2n}}}} =
\frac{\displaystyle\prod_{n \geq 1} R(2n+1)^{(-1)^{v_n}}}
{\displaystyle\prod_{n \geq 1} R(4n+1)^{(-1)^{v_n}}}
$$
which finally gives
$$
\prod_{n \geq 1} \left(\frac{R(n)R(2n+1)}{R(2n) R(4n+1)^2}\right)^{(-1)^{v_n}} =  R(1).
$$
Now taking $R(X) = \frac{(X+2)^2}{(X+1)(X+3)}$ we obtain
$$
\prod_{n \geq 1}
\left( \frac{4(n+2)(2n+1)^3(2n+3)^3}{(n+3)(n+1)^2(4n+3)^4}\right)^{(-1)^{v_n}} =  \frac{9}{8}\cdot
$$
Thus
$$
\prod_{n \geq 0} \left(\frac{4(n+2)(2n+1)^3(2n+3)^3}{(n+3)(n+1)^2(4n+3)^4}\right)^{(-1)^{v_n}} = 1.
$$
Now
\begin{align*}
\prod_{n \geq 0} \frac{4(n+2)(2n+1)^3(2n+3)^3}{(n+3)(n+1)^2(4n+3)^4}
&= 
\prod_{n \geq 0} \frac{(n+2)(n + \frac{1}{2})^3(n+\frac{3}{2})^3}{(n+3)(n+1)^2(n+\frac{3}{4})^4} \\
&= 
\frac{\Gamma(3) \Gamma(1)^2 \Gamma(3/4)^4}{\Gamma(2) \Gamma(1/2)^3 \Gamma(3/2)^3}
= \frac{16 \Gamma(3/4)^4}{\pi^6}\cdot
\end{align*}
\end{proof}

\section{Some questions}

The arithmetical properties values of the infinite products that we have obtained can be quite
different. Some are rational (e.g., $\prod_{n \geq 0}\left(\frac{(4n+1)(8n+7)}{(4n+2)(8n+3)}
\right)^{(-1)^{t_n}} = 1$ in Corollary~\ref{examples}~(k)), some are algebraic irrational such as 
the Woods-Robbins product ($\prod_{n \geq 0} \left(\frac{2n+1}{2n+2}\right)^{(-1)^{t_n}} =
\frac{\sqrt{2}}{2}$), some are transcendental 
(e.g., $\prod_{n \geq 0} \left(\frac{(4n+1)(4n+4)}{(4n+2)(4n+3)}\right)^{t_n} =
\frac{\pi^{3/4} \sqrt{2}}{\Gamma(1/4)}$ whose transcendency is a consequence of the algebraic 
independence of $\pi$ and $\Gamma(1/4)$ proved by \v{C}udnovs'ki\u{\i}; see \cite{Cudnovskii}).
As in \cite{All} some of these values could be proved transcendental if one admits the Rohrlich 
conjecture. A still totally open question is the arithmetical nature of the Flajolet-Martin constant(s)
(see the beginning of Section~\ref{FM}), namely $\varphi$ and $R$, where
$\varphi := 2^{-1/2}e^{\gamma}\frac{2}{3} R$, and 
$R := \prod_{n \geq 1} \left(\frac{(4n+1)(4n+2)}{4n(4n+3)}\right)^{(-1)^{t_n}} = \frac{3}{2g(0)} \cdot  $

\bigskip

Another question is to generalize the results for the Thue-Morse sequence to other sequences
counting certain patterns in the base-$b$ expansion of integers: the example given in 
Section~\ref{GS} is a first step in this direction. 

\section*{Acknowledgment}

This paper is an extended version of \cite{Riasat}. While we were preparing this extended
version, we found the paper \cite{vignat-wakhare} which has interesting results on finite (and 
infinite) sums involving the sum of digits of integers in integer bases.

\end{document}